\documentclass[reqno,12pt,twoside]{article}
\usepackage{fullpage}
\usepackage{amssymb}
\usepackage{amsmath}
\usepackage{amsthm}
\numberwithin{equation}{section}
\newtheorem{theorem}{Theorem}

\newtheorem{lemma}{Lemma}
\newcommand{\Nzero}{\mathbb N_0}

\newcommand{\card}[1]{\left|#1\right|}
\newcommand{\fr}[1]{\left\{#1\right\}}

\begin{document}

\title{\Large On a problem of Erd\H{o}s and Nathanson related to minimal asymptotic bases of order $h$
\footnote{The first author is supported by the National Natural Science Foundation of China (Grant No.
12301003), the Anhui Provincial Natural Science Foundation (Grant No. 2308085QA02) and
the University Natural Science Research Project of Anhui Province (Grant No. 2022AH050171). The second author is supported by the National Natural Science Foundation of China (Grant No.
12371005).}}

\author{\large Shi-Qiang Chen$^{a}$,
\large ~~~Quan-Hui Yang$^{b,}$\thanks{Corresponding author. E-mail addresses: csq20180327@163.com (S.-Q. Chen), yangquanhui01@163.com (Q.-H. Yang)}}
\date{} \maketitle
 \vskip -3cm
\begin{center}
\vskip -1cm { \small
\begin{center}
 $a$. School of Mathematics and Statistics,
\end{center}
\begin{center}
Anhui Normal University, Wuhu 241002, P.R. China
\end{center}}
\vskip -1cm { \small
\begin{center}
 $b$. School of
Mathematical Sciences and Ministry of Education Key Laboratory for NSLSCS,
\end{center}
\begin{center}
Nanjing Normal University, Nanjing 210023,
P.R. China
\end{center}}
\end{center}

{\bf Abstract.} Let $h\geq 3$ be an integer and $0<\alpha<1/h$. In this paper,  we prove that there exists a
minimal asymptotic basis of order $h$ with asymptotic density $\alpha$. This solves an open problem posed by Erd\H{o}s and Nathanson in 1988.

\medskip
\noindent{\bf Keywords:} minimal asymptotic basis; asymptotic density;
disjoint representations; irrational rotation

\medskip
\noindent 2020 {\it Mathematics Subject Classification}: 11B13.

\section{Introduction}
Let $\mathbb N$ denote the set of positive integers and put
$\Nzero=\mathbb N\cup\{0\}$. For $A\subseteq\Nzero$, a real number
$x\geq 1$, and a positive integer $h$, define
\[
A(x)=\card{A\cap[1,x]}
\]
and
\[
hA=\{a_1+\cdots+a_h:a_1,\ldots,a_h\in A\}.
\]
We call $A$ an asymptotic basis of order $h$ if $hA$ contains every
sufficiently large integer. An asymptotic basis $A$ of order $h$ is called
minimal if no proper subset of $A$ is an asymptotic basis of order $h$.
The study of minimal asymptotic bases is one of the classical topics in
additive number theory, and a basic problem is to understand how dense a
minimal asymptotic basis can be. 

Let $A$ be a set of integers. The \emph{lower asymptotic density} of $A$ is defined by
\[
d_L(A)=\liminf_{x\to\infty}\frac{A(x)}{x}.
\]
If the limit
\[
\lim_{x\to\infty}\frac{A(x)}{x}
\]
exists, it is called the \emph{asymptotic density} of $A$ and is denoted by $d(A)$. 

For real numbers $a<b$, $(a,b)$ denotes the open interval with endpoints $a$ and $b$, $(a,b]$ denotes the left-open, right-closed interval and $[a,b)$ denotes the left-closed, right-open interval with endpoints $a$ and $b$. In 1989, Nathanson and S\'ark\"ozy~\cite{NathansonSarkozy1989} proved that every
minimal asymptotic basis $A$ of order $h$ satisfies $d_L(A)\leq 1/h$.
In 1988, Erd\H{o}s and Nathanson~\cite{EN} constructed minimal asymptotic bases
$A$ of order $h$ with $d(A)=1/h$. They also proved that for every
$\alpha\in\left(0,1/(2h-2)\right)$, there exists a minimal asymptotic basis $A$
of order $h$ with $d(A)=\alpha$. In particular, for $h=2$ this gives every
$\alpha\in(0,1/2]$. Naturally, they posed the following well-known open problem.\\
{\bf Erd\H{o}s--Nathanson's Problem.}
Let $h\geq 3$ be an integer. If $\alpha\in(0,1/h)$, prove that there exists a
minimal asymptotic basis $A$ of order $h$ with asymptotic density $\alpha$.

For other related results on minimal asymptotic bases, see [1-3, 5-10, 12].

In this paper, we solve this problem. 
\begin{theorem}\label{thm1}
Let $h\geq3$ be an integer and let $0<\alpha<1/h$. Then there exists a minimal asymptotic
basis $A'$ of order $h$ such that $d(A')=\alpha$.
\end{theorem}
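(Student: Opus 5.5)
We build the basis from an irrational rotation, as the keywords suggest. Fix an irrational $\theta$ and a half-open arc $I\subset\mathbb R/\mathbb Z$ of length $h\alpha<1$. The core set will be
\[
B=\{n\in\mathbb N:\ n\theta \bmod 1\in I_0\},
\]
where $I_0$ is an arc of length $\alpha$. By Weyl equidistribution, $B$ has density $\alpha$, and any modification of $B$ on a set of density zero keeps this density. The idea is that a sum $b_1+\cdots+b_h$ with $b_i\in B$ has fractional part $(b_1+\cdots+b_h)\theta$ in $hI_0$, an arc of length $h\alpha<1$. So $hB$ misses every $n$ with $n\theta\notin hI_0$, a set of positive density $1-h\alpha$.

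To cover those $n$, we adjoin a sparse "repair" set $C$ of density zero. It should consist of rapidly growing elements $c_1<c_2<\cdots$ with prescribed fractional parts $c_k\theta$, chosen so that $n-c_k\in (h-1)B$ for all $n$ in a long block $[N_k,N_{k+1})$ with $n\theta\notin hI_0$. We can arrange this because $(h-1)B$ contains all large $m$ with $m\theta$ in the interior of $(h-1)I_0$. This needs a quantitative equidistribution argument: the sets $\{m: m\theta\in J\}$ are Bohr sets, and $(h-1)$-fold sums of a Bohr set cover the corresponding Bohr set up to finitely many exceptions. Since the complement of $hI_0$ has length $1-h\alpha$ while a shift of $(h-1)I_0$ has length $(h-1)\alpha$, a single $c_k$ per block is not enough. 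We instead use finitely many residues $c\theta$, spaced so their shifts cover the complementary arc, each realized by one new element per block. Alternatively we could use a bounded number of repair elements, each summed with $h-1$ elements of $B$. Setting $A'=B\cup C$, we get $d(A')=\alpha$ and $A'$ is an asymptotic basis of order $h$.

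Minimality is the main obstacle. For each $a\in A'$ we must exhibit infinitely many $n$ all of whose representations in $hA'$ use $a$. For $a=c\in C$ this comes from the construction: choose $n$ in the blocks where $c$ is the unique repair element whose fractional window covers $n\theta$, and use the fact that two repair elements or sums of $B$ alone land in the wrong arc. This requires choosing the windows $c\theta$ with small overlaps and growth fast enough that sums involving two or more large $c$'s are out of range. For $a=b\in B$ the standard device, as in Erd\H{o}s--Nathanson, is to thin $B$. We replace $I_0$ by an arc whose boundary points are hit only by prescribed elements, and for each $b$ we take $n=b+(h-1)b'$-type numbers with $n\theta$ at an extreme endpoint of $hI_0$. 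There the representation is forced, because every summand must sit at the extreme end of $I_0$, near its endpoint. Making this rigorous will require removing from $B$ a density-zero set of elements near the arc endpoints, and then adding back a sparse structured set $\{b_j\}$ whose multiples realize the extreme values. We then verify that the only representations of $n_j=hb_j$-like targets are the designed ones.

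I expect this bookkeeping to be the hardest step. We must simultaneously keep each element of $B$ essential (forcing extreme-endpoint representations) and keep each element of $C$ essential while still covering every large integer. I would first try the version where $A'$ is the union of $B$ with shifted copies, letting each $b$ be witnessed by $n$ with $n\theta$ arbitrarily close to the top endpoint of $hI_0$, which Dirichlet approximation provides infinitely often.
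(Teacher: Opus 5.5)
The genuine gap is minimality. You leave it as a plan, and the plan fails for both parts of $A'$. Your density and covering steps are essentially fine, with one small fix: $kB$ is only guaranteed to contain all large $m$ when $m\theta$ stays in a compact subarc of the interior of $kI_0$. So your repair arcs must overlap each other and also cover the boundary of $hI_0$.

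\emph{Dense part.} Write $I_0=[\beta,\beta+\alpha)$ and take $b\in B$ with $b\theta\equiv\beta+\tau$, $0<\tau<\alpha$ (this holds for all $b$ but at most one). Suppose a representation of $n$ contains $b$ exactly $k$ times and also a large $x$ with $x\theta\in\overline{I_0}$. This includes any large element of $B$, and also the endpoint elements you plan to add back. Then $m=kb+x$ satisfies $m\theta\equiv(k+1)\beta+\mu\pmod 1$ with $\mu\in[k\tau,k\tau+\alpha]$. This $\mu$ is at distance at least $\min(\tau,\alpha-\tau)$ from the ends of $[0,(k+1)\alpha]$, and $(k+1)\alpha\le h\alpha<1$. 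By Weyl, $m$ therefore has $\gg_b x^{k}$ representations as a sum of $k+1$ elements of $B$, still true after deleting a density-zero subset. Only $O(x^{k-1})$ of these involve $b$, so substituting one of the others gives a representation of $n$ without $b$. Hence $b$ can be essential only through representations whose other $B$-summands are bounded in terms of $b$, and your design never produces such representations.

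The extreme-endpoint device actually works against you. If $n\theta$ is within $\eta$ of the top of $hI_0$, every summand $y$ has $y\theta$ within $\eta$ of the top of $I_0$. That \emph{excludes} $b$ once $\eta<\alpha-\tau$.

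\emph{Repair part.} Each $c$ needs infinitely many essential $n$, but the single block in which $c$ is used gives only finitely many. This cannot be patched within your scheme. Cover the circle by finitely many windows of length less than $(h-1)\alpha$. Every $c$ except at most two per window has repair elements $c^{-},c^{+}$ with $c^{-}\theta<c\theta<c^{+}\theta$ in the same window. The arcs $c^{\pm}\theta+(h-1)I_0$ then cover $c\theta+(h-1)I_0$ with a uniform margin. So for all large $n$, a representation $n\in c+(h-1)B$ can be replaced by one using $c^{-}$ or $c^{+}$, and all but finitely many repair elements are inessential.

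What is missing is the role reversal on which the paper's proof rests. The set carrying the density must occur exactly once in the critical representations, with the other $h-1$ summands taken from a lacunary set. The paper takes $B=\{hq_i+\varepsilon_i\}$ with $\varepsilon_i\in\{0,1\}$, $b_{i+1}>(2h-1)b_i$, and $q_i\theta$ visiting every rational interval infinitely often for each $\varepsilon$. The dense set is $C=\{hm:\{m\theta\}<h\alpha\}\setminus B_0$, which has density $\alpha$ because it lives on multiples of $h$.

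Covering comes from disjoint blocks of $h-1$ elements of $B$ whose sums $hQ+e$ have $\{Q\theta\}$ in a prescribed short interval. Then $n-hQ-e\in C$ for many blocks at once, which gives $r(n)\to\infty$.

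Minimality is not proved by hand. Instead one verifies the Erd\H{o}s--Nathanson criterion (Lemma~\ref{lem1}). Taking $w\equiv h-1\pmod h$ forces at most one $C$-summand, since every element of $C$ is $\equiv0$ and every element of $B$ is $\equiv0$ or $1\pmod h$. Lacunarity forces every competing $c'\in\Omega(w)$ to exceed $w/(h+1)$. The lemma then deletes only $O(B(x)^{h-1})=O((\log x)^{h-1})$ elements of $C$, so the density stays $\alpha$.
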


For an integer $k\geq 2$, a sequence $B=\{b_i\}_{i\geq1}$ of positive
integers is called a $B_k$-sequence if
every sum of $k$
elements of $B$, written in nondecreasing order, has a unique
representation. We use the elementary
lacunarity fact that if $b_{i+1}>k b_i$ for every $i\geq1$, then
$B$ is a $B_k$-sequence. Moreover, such a sequence satisfies
$B(x)=O(\log x)$.

\section{Proof of Theorem~\ref{thm1}}

\begin{lemma}(See \cite[Theorem~1]{EN})\label{lem1}
Let $h\geq2$, and let $A=B\cup C$ be an asymptotic basis of order $h$, where
$B$ and $C$ are disjoint. Let $r(n)$ be the largest number of pairwise
disjoint representations
\[
 n=b_1+\cdots+b_{h-1}+c,
 \qquad b_i\in B,\quad b_1<\cdots<b_{h-1},\quad c\in C.
\]
Let $W$ be the set of those $w\in hA$ for which every representation of $w$
by $h$ elements of $A$ contains at most one element of $C$, and put
\[
 \Omega(n)=\{c\in C:n-c\in(h-1)B\}.
\]
Suppose that, for some $\delta\in(0,1)$, the following properties hold.
\begin{enumerate}
 \item If $B=\{b_i\}_{i\geq1}$ in increasing order, then
       $b_{i+1}>(2h-2)b_i$.
 \item We have $r(n)\to\infty$ as $n\to\infty$.
 \item For every $c\in C$, there are infinitely many choices of
       $b_1,\ldots,b_{h-1}\in B$ for which
       \[
       w=b_1+\cdots+b_{h-1}+c\in W\setminus hB
       \]
       and $c'>\delta w$ for every $c'\in\Omega(w)\setminus\{c\}$.
 \item For every $b_1\in B$, at least one of the following alternatives
       occurs infinitely often:
       \begin{enumerate}
       \item there are $b_2,\ldots,b_{h-1}\in B$ and $c\in C$ such that
             \[
             w=b_1+\cdots+b_{h-1}+c\in W\setminus hB
             \]
             and $c'>\delta w$ for every
             $c'\in\Omega(w)\setminus\{c\}$;
       \item there are $b_2,\ldots,b_h\in B$ such that
             \[
             w=b_1+\cdots+b_h\in W
             \]
             and $c'>\delta w$ for every $c'\in\Omega(w)$.
       \end{enumerate}
\end{enumerate}
Then there is a set $C'\subseteq C$ such that $B\cup C'$ is a minimal
asymptotic basis of order $h$ and
\[
 (C\setminus C')(x)\leq 2B(x)^{h-1}
\]
for all sufficiently large $x$. In particular, if $B(x)=O(\log x)$, then
$d(C\setminus C')=0$.
\end{lemma}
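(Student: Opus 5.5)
This lemma is Theorem~1 of Erd\H{o}s--Nathanson, so I would reprove it along their lines: prune $C$ greedily and use conditions 1--4 to show that what is left is still a basis, is minimal, and differs from $C$ by few elements.

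\textbf{Construction of $C'$.} The set $B$ is kept in full. By condition 1, $B$ is a $B_{2h-2}$-sequence, so sums of at most $h-1$ elements of $B$ are essentially unique. I enumerate $C=\{c_1<c_2<\cdots\}$ and decide at stage $j$ whether to keep $c_j$. I would delete $c_j$ exactly when it is ``redundant'': every sufficiently large $n$ having a representation that uses $c_j$ must also have another representation, using elements of $B$ and of the kept part of $C$, that avoids $c_j$. In practice it is easier to delete elements only on an explicit sparse set. For each $c\in C$ and each choice $b_1<\dots<b_{h-1}$ with $b_1+\dots+b_{h-1}<c$, the element $c$ witnesses $n=c+b_1+\dots+b_{h-1}$. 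So I would delete from $C$ those $c$ that coincide with $n'-\sum b_i$ for a second such tuple with small $n'$. This deletes at most one element per pair of $(h-1)$-tuples of $B$ below $x$, which gives the count $(C\setminus C')(x)\le 2B(x)^{h-1}$. If $B(x)=O(\log x)$, this count is $O((\log x)^{h-1})=o(x)$, hence $d(C\setminus C')=0$.

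\textbf{$B\cup C'$ is a basis.} I would use condition 2. Each $n$ has $r(n)\to\infty$ pairwise disjoint representations $b_1+\dots+b_{h-1}+c$. The deleted elements near $n$ are few: any removed $c$ kills at most one of these disjoint representations, and the deletions that can be relevant to $n$ form a bounded-type collection. So for large $n$ at least one representation survives, and $n\in h(B\cup C')$. The real content here is that deletions were made only to break coincidences, so any single $n$ can lose only $O(1)$ of its disjoint representations. If that fails, I would instead make the greedy choice adaptively, deleting $c$ only when it keeps $r'(n)\ge r(n)/2$ for all $n$ in a window.

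\textbf{Minimality.} This is the main step. Removing any $c\in C'$ must destroy infinitely many representable $n$. Condition 3 gives infinitely many $w=b_1+\dots+b_{h-1}+c\in W\setminus hB$. Since $w\in W$, every representation uses at most one element of $C$. Since $w\notin hB$, every representation uses exactly one element $c'\in\Omega(w)$. All $c'\ne c$ in $\Omega(w)$ exceed $\delta w$, so they are large. The pruning must be arranged, via $\delta$ and lacunarity, so that these $c'$ are deleted from $C'$ or never kept, while $c$ itself is kept. Then $c$ is the unique element of $C'$ available for $w$. For $b_1\in B$, condition 4 gives the analogous statement: in case (a) the same argument applies; in case (b) $w\in hB\cap W$ has all $C$-competitors larger than $\delta w$, and these are eliminated. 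Removing $b_1$ then kills $w$, because by the $B_{2h-2}$ property the representation of $w$ inside $hB$ is unique.

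The main obstacle is making the deletion rule both sparse (giving the $2B(x)^{h-1}$ count) and strong enough to remove every large competitor $c'>\delta w$ for infinitely many witnesses $w$. I would first try an inductive construction along the increasing witness sequence $w_1<w_2<\cdots$. At step $k$, I delete the at most $2B(w_k)^{h-1}$ large competitors in $\Omega(w_k)$. Choosing $w_{k+1}$ with $\delta w_{k+1}>w_k$ keeps these deletions from interfering with earlier witnesses.
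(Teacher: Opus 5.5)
The paper does not prove this lemma; it quotes it from Erd\H{o}s--Nathanson. So your sketch has to stand on its own, and as written it has a genuine gap.

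First, $C'$ is never pinned down. The rule in your first paragraph (delete $c$ if it equals $n'-\sum b_i$ ``for a second such tuple with small $n'$'') is not a definite rule. The count you attach to it, one deletion per pair of $(h-1)$-tuples below $x$, gives $O(B(x)^{2h-2})$, not $2B(x)^{h-1}$. The stagewise rule in your last paragraph is the right kind of construction, but it has to be specified:
\begin{itemize}
\item At stage $k$, take the smallest element of $B\cup C$ that is neither deleted nor already certified.
\item Choose a witness $w_k$ for it from condition~3 or~4, with $\delta w_k$ exceeding all earlier witnesses. In case 4(a), the witness's designated $c$ must still be present.
\item Delete $\Omega(w_k)$ except its designated element (all of $\Omega(w_k)$ in case 4(b)).
\end{itemize}
Processing in increasing order is what ensures that every element of the final $C'$, and every $b\in B$, gets certified. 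Stage $k$ deletes at most $|\Omega(w_k)|$ elements, all lying in $(\delta w_k,w_k]$, so taking the $w_k$ sparse enough gives the bound $2B(x)^{h-1}$.

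The essential missing step is that $B\cup C'$ is still an asymptotic basis. You assert that each $n$ ``can lose only $O(1)$ of its disjoint representations,'' but no counting argument can give this: up to order $B(n)^{h-1}$ elements below $n$ may be deleted, while condition~2 gives no rate for $r(n)\to\infty$. What is needed is a structural fact.

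\emph{Claim.} If $n\neq w_k$, then $\Omega(w_k)$ contains the $C$-element of at most one of the pairwise disjoint representations $n=s_j+c_j$ with $s_j\in(h-1)B$.

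\emph{Proof.} Suppose $c_j=w_k-s$ and $c_{j'}=w_k-s'$ with $s,s'\in(h-1)B$. Then $s_j+s'=s_{j'}+s$ is an equality of $(2h-2)$-fold sums. Condition~1 makes $B$ a $B_{2h-2}$-sequence, so the two multisets coincide. Since $s_j$ and $s_{j'}$ share no element, this forces $s=s_j$, i.e.\ $n=w_k$.

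This is exactly where the constant $2h-2$ in condition~1 is used; you only invoke lacunarity for uniqueness of sums of at most $h$ terms. Stage $k$ deletes only elements larger than $\delta w_k$, so only stages with $\delta w_k<n$ can affect $n$. Choose each $w_k$ so that $r(m)>k$ for all $m>\delta w_k$, which condition~2 allows. Then every large $n\neq w_k$ keeps at least one of its $r(n)$ disjoint representations, and each $w_k$ keeps its designated one.

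Your fallback of an ``adaptive greedy'' deletion does not substitute for this argument. Without it, the basis property of $B\cup C'$, and hence the lemma, is not established.
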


\begin{lemma}(See  \cite{Weyl1916})\label{lem2}
Let $\theta$ be an irrational number. Then the sequence
$\{n\theta\}$, $n=1,2,\ldots$, is uniformly distributed modulo $1$.
More precisely, for every interval $[a,b)\subseteq[0,1)$, we have
\[
 \bigl|\{1\leq n\leq N:a\leq \{n\theta\}<b\}\bigr|
 =(b-a)N+o(N)
\]
as $N\to\infty$. In particular, the sequence $\{n\theta\}$ visits every
nonempty open interval in $(0,1)$ infinitely often.
\end{lemma}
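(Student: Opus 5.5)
We prove the statement through Weyl's criterion: first show that the normalized exponential sums of the sequence $\{n\theta\}$ tend to zero, and then pass from exponentials to the indicator function of $[a,b)$ by approximation. Throughout we write $e(x)=e^{2\pi i x}$.

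\emph{Step 1 (exponential sums).} Fix a nonzero integer $k$. Since $\theta$ is irrational, $k\theta\notin\mathbb Z$, so $e(k\theta)\neq 1$. Summing the geometric series gives
\[
\Bigl|\sum_{n=1}^{N} e(kn\theta)\Bigr| = \frac{|e(kN\theta)-1|}{|e(k\theta)-1|} \le \frac{1}{|\sin(\pi k\theta)|}.
\]
The right-hand side is a constant independent of $N$. Hence $\frac1N\sum_{n\le N} e(kn\theta)\to 0$. Because $e(kn\theta)=e(k\{n\theta\})$, we conclude that
\[
\frac1N\sum_{n\le N} P(\{n\theta\}) \to \int_0^1 P(x)\,dx
\]
for every trigonometric polynomial $P(x)=\sum_{|k|\le K} c_k e(kx)$. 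The only term with nonzero integral is $k=0$, and it contributes exactly $c_0$.

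\emph{Step 2 (continuous functions).} Let $f$ be continuous and $1$-periodic. By the Weierstrass approximation theorem, in its trigonometric form (for instance via Fejér means), there is for each $\varepsilon>0$ a trigonometric polynomial $P$ with $\sup|f-P|<\varepsilon$. Applying Step 1 to $P$ and letting $\varepsilon\to0$ gives
\[
\frac1N\sum_{n\le N} f(\{n\theta\})\to\int_0^1 f .
\]

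\emph{Step 3 (indicators).} This is the only step needing care. The indicator $\chi$ of $[a,b)$ is discontinuous, so we sandwich it. For $\varepsilon>0$ we choose continuous $1$-periodic functions $f_-\le \chi\le f_+$ with $\int_0^1 (f_+-f_-)<2\varepsilon$; piecewise-linear trapezoids work. If $a=0$ or $b=1$, the trapezoid must wrap around periodically, which only requires choosing the ramps inside $[0,1)$ modulo $1$. Applying Step 2 to $f_\pm$ gives
\[
b-a-2\varepsilon \le \liminf_{N\to\infty} \frac1N\bigl|\{n\le N:\{n\theta\}\in[a,b)\}\bigr| \le \limsup_{N\to\infty} \frac1N\bigl|\{n\le N:\{n\theta\}\in[a,b)\}\bigr| \le b-a+2\varepsilon .
\]
Letting $\varepsilon\to0$ yields the count $(b-a)N+o(N)$.

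\emph{Step 4 (open intervals).} For the final assertion, a nonempty open interval $I\subseteq(0,1)$ contains some $[a',b')$ with $b'>a'$. By Step 3, the number of $n\le N$ with $\{n\theta\}\in[a',b')$ is $(b'-a')N+o(N)$, which tends to infinity. Hence $\{n\theta\}$ lies in $I$ for infinitely many $n$.
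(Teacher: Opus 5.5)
The paper gives no proof of this lemma and simply cites Weyl (1916); your argument is the standard Weyl-criterion proof from that source (a geometric-series bound on $\sum_{n\le N} e(kn\theta)$, uniform trigonometric approximation of continuous periodic functions, then sandwiching the indicator of $[a,b)$). It is correct, including the wrap-around care when $a=0$ or $b=1$ and the deduction of the open-interval statement.
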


\begin{proof}[Proof of Theorem~\ref{thm1}]
Put $\rho=h\alpha$. Noting that $0<\alpha<1/h$, we have $\rho\in(0,1)$.
Let
\[
 \mathcal I=\{(a,b):a,b\in\mathbb Q\cap[0,1],\ a<b\}.
\]
Choose a sequence $(I_i,\varepsilon_i)_{i\geq1}$ in
$\mathcal I\times\{0,1\}$ such that every pair $(I,\varepsilon)$ occurs
infinitely often. Fix an irrational number $\theta$. By Lemma~\ref{lem2}, the sequence
$\{q\theta\}$, $q\in\mathbb N$, visits every nonempty open interval
infinitely often. Hence, choose $q_1\in\mathbb N$ with
$\fr{q_1\theta}\in I_1$, and let
\[
 b_1=hq_1+\varepsilon_1.
\]
We define the remaining numbers recursively. Assume that $b_i$ and
$q_i\in\mathbb N$ have been chosen with
\[
 b_i=hq_i+\varepsilon_i.
\]
Choose $q_{i+1}\in\mathbb N$ such that
\[
 \fr{q_{i+1}\theta}\in I_{i+1}
 \qquad\text{and}\qquad
 hq_{i+1}+\varepsilon_{i+1}>(2h-1)b_i,
\]
and let $b_{i+1}=hq_{i+1}+\varepsilon_{i+1}$.
 We define

\[
 B=\{b_i\}_{i\geq1}.
\]
Then
\begin{equation}\label{eq1}
 b_{i+1}>(2h-1)b_i
\end{equation}
for all integers $i\geq1$.
Moreover, for each $\varepsilon\in\{0,1\}$ and each nonempty open interval $I'\subset (0,1)$, we have
\begin{equation}\label{eq2}
| \bigl\{\fr{q_i\theta}:\varepsilon_i=\varepsilon\bigr\}\cap I'|=\infty.
\end{equation}
Let
\[
 B_\varepsilon=\{b_i\in B:b_i\equiv\varepsilon\pmod h\}
 \quad(\varepsilon=0,1),
\]
and define
\[
 C_0=\{hm:m\in\Nzero,\ \fr{m\theta}<\rho\},
 \qquad C=C_0\setminus B_0,
 \qquad A=B\cup C.
\]
Then $B\cap C=\emptyset$. By Lemma~\ref{lem2}, we have
\[
 C_0(x)=\rho\frac{x}{h}+o(x)=\alpha x+o(x).
\]
On the other hand, by \eqref{eq1},  we have $B(x)=O(\log x)$. Therefore,
\begin{equation}\label{eq3}
 d(C)=d(A)=\alpha.
\end{equation}

We verify that the hypotheses of Lemma \ref{lem1} hold. By \eqref{eq1}, condition (1) is true. 

Since
\[ b_{i+1}>(2h-1)b_i>(2h-2)b_i,~~~b_{i+1}>(2h-1)b_i>hb_i>(h-1)b_i,
\]
it follows that $B$ is a $B_{2h-2}$-sequence and hence both a $B_{h-1}$-sequence and a
$B_h$-sequence. Fix an integer $M\geq1$ and choose an integer $K$ such that
\begin{equation}\label{eq4}
 \frac{2}{K}<\rho.
\end{equation}
For every $e\in\{0,1,\ldots,h-1\}$, $k\in\{1,\ldots,K\}$, and
$j\in\{1,\ldots,M+1\}$, choose a block
\[
 T_{e,k,j}=\{b_{1,e,k,j},\ldots,b_{h-1,e,k,j}\}\subseteq B
\]
so that all the blocks are mutually disjoint, exactly $e$ members of the
block belong to $B_1$, and the other $h-1-e$ members belong to $B_0$.
Writing
\[
 b_{i,e,k,j}=hq_{i,e,k,j}+\varepsilon_{i,e,k,j},
\]
we additionally require
\begin{equation}\label{eq5}
 \fr{q_{i,e,k,j}\theta}\in
 \left(\frac{k-1}{(h-1)K},\frac{k}{(h-1)K}\right)
 \qquad(1\leq i\leq h-1).
\end{equation}
Property \eqref{eq2} makes all these finitely many, mutually disjoint
choices possible.

Let
\[
 s_{e,k,j}=\sum_{i=1}^{h-1}b_{i,e,k,j}
           =hQ_{e,k,j}+e.
\]
Since the sum of the $h-1$ intervals in \eqref{eq5} is contained
in 
\[
((k-1)/K,k/K)\subset(0,1),
\]
it follows that
\begin{equation}\label{eq6}
 \fr{Q_{e,k,j}\theta}\in
 \left(\frac{k-1}{K},\frac{k}{K}\right).
\end{equation}
Noting that $B$ is a
$B_{h-1}$-sequence, the numbers $s_{e,k,j}$ are distinct for distinct $j$.  Let 
\[S=\max\{ s_{e,k,j}:e\in\{0,1,\ldots,h-1\}, k\in\{1,\ldots,K\}, j\in\{1,\ldots,M+1\}\}.
\]

Take $n>2S$ and write $n=hL+e$, where $0\leq e<h$. Put
$x=\fr{L\theta}$. If $x\in[1/K,1)$, choose
$k\in\{1,\ldots,K-1\}$ with
\[
 \frac{k}{K}\leq x<\frac{k+1}{K},
\]
it follows from
\eqref{eq6} that
\[
 0<\fr{(L-Q_{e,k,j})\theta}<\frac{2}{K}.
\]
If $x\in[0,1/K)$, choose $k=K$, then
\[
 0<\fr{(L-Q_{e,k,j})\theta}<\frac{2}{K}.
\]
Therefore, by \eqref{eq4}, we have
\[
 c_j=n-s_{e,k,j}=h(L-Q_{e,k,j})>S>0
\]
and
\[
 0<\fr{(L-Q_{e,k,j})\theta}<\frac{2}{K}<\rho
\]
for every $j\in\{1,\ldots,M+1\}$. Hence
$c_j\in C_0$ for every $j\in\{1,\ldots,M+1\}$. Moreover, no $c_j$ is one of the previously chosen block elements.

Since $B$ is a $B_{h}$-sequence, it follows that
\[|\{c_1,\ldots,c_{M+1}\}\cap B_0|\leq1.
\]
Hence
\[|\{c_1,\ldots,c_{M+1}\}\cap (C_0\setminus B_0)|\geq M.
\]
The corresponding representations are pairwise disjoint, since the blocks
are mutually disjoint, the numbers $c_j$ are distinct, and every $c_j>S$
exceeds every block element. Thus, $r(n)\geq M$ for every sufficiently large $n$. Noting that $M$ is arbitrary, we have
\begin{equation}\label{eq7} 
 r(n)\longrightarrow\infty.
\end{equation}
In particular, $A$ is an asymptotic basis of order $h$. Thus, condition (2) is true.

Since $c\equiv0\pmod{h}$ for every $c\in C$ and $a\equiv0,1\pmod{h}$ for every $a\in A$, it follows that:
\begin{equation}\label{eq8}
 w\in hA,\quad w\equiv h-1\pmod h
 \qquad\Longrightarrow\qquad w\in W.
\end{equation}
Let
\begin{equation}\label{eq9}
 \delta=\frac{1}{h+1}.
\end{equation}
For $t\geq2$, by \eqref{eq1} and $h>2$, we have
\begin{equation}\label{eq10}
 b_t-b_{t-1}>
 \frac{2h-2}{2h-1}b_t>
 \frac{h}{h+1}b_t.
\end{equation}
Fix $c\in C$. Choose arbitrarily large index $t\geq2$ such that $b_t\in B_1$ and $b_t>c$, and put
\begin{equation}\label{eq11}
 w=(h-1)b_t+c.
\end{equation}
Then $w\equiv h-1\pmod h$, and it follows from \eqref{eq8} that $w\in W$. Also,
\[
 (h-1)b_t\leq w<hb_t<b_{t+1}.
\]
We claim that $w\notin hB$.  Assume that $w\in hB$. Write
\[w=b_1+b_2+\ldots+b_h,~~~b_i\in B,~~i=1,2,\ldots,h.
\]
Then $b_i\leq b_t$ for $i=1,2,\ldots,h$.
If $\sum\limits_{i=1}^{h}|\{b_i\}\cap\{b_t\}|=h$, then 
\[w=(h-1)b_t+c=hb_t.\]
So $c=b_t$, a contradiction. If $\sum\limits_{i=1}^{h}|\{b_i\}\cap\{b_t\}|=h-1$, then  $c\in B$, a contradiction. If $\sum\limits_{i=1}^{h}|\{b_i\}\cap\{b_t\}|\leq h-2$, then 
\[
 w\leq (h-2)b_t+2b_{t-1}<(h-1)b_t\leq w,
\]
a contradiction. Hence, $w\in W\setminus hB$.

Let $c'\in\Omega(w)\setminus\{c\}$ and write
\[
 w=b'_1+\cdots+b'_{h-1}+c',\qquad b'_i\in B.
\]
Then $b'_i\leq b_t$ for $i=1,2,\ldots, h-1$. Noting that $c'\neq c$, we have 
\[
\sum\limits_{i=1}^{h-1}|\{b'_i\}\cap\{b_t\}|<h-1.
\] 
It follows from $w<hb_t$ that
\[
 c'\geq w-\bigl((h-2)b_t+b_{t-1}\bigr)
      \geq b_t-b_{t-1}
      >\frac{h}{h+1}b_t
      >\frac{w}{h+1}=\delta w.
\]
Hence, condition
(3) is true.

Fix $b_u\in B_0$, choose arbitrarily large index $t>u$ such that $b_t\in B_1$,
and let
\begin{equation}\label{eq12}
 w=b_u+(h-1)b_t.
\end{equation}
By $w\in hB$ and \eqref{eq8}, we have $w\in W$. If
$c'\in\Omega(w)$, then write
\[ 
w=b'_1+b'_2+\cdots+b'_{h-1}+c'.
\]
By $b_u\notin C$, we have
\[
\sum\limits_{i=1}^{h-1}|\{b'_i\}\cap\{b_t\}|<h-1.
\] 
Exactly as above, by $w<hb_t$, we have 
\[
 c'>b_t-b_{t-1}>\frac{w}{h+1}=\delta w.
\]

Fix $b_u\in B_1$ and choose arbitrarily large index
$t>u$ such that $b_t\in B_1$. By \eqref{eq3}, we have
\[
 \card{C\cap(2b_t-b_u,3b_t-b_u)}
   =\alpha b_t+o(b_t)>0
\]
for all sufficiently large $t$. Choose
\[
 c\in C\cap(2b_t-b_u,3b_t-b_u)
\]
and put
\begin{equation}\label{eq13}
 w=(h-2)b_t+b_u+c.
\end{equation}
Then $w\equiv h-1\pmod h$, and hence $w\in W$. Moreover,
\begin{equation}\label{eq14}
 hb_t<w<(h+1)b_t<(2h-1)b_t<b_{t+1},
\end{equation}
where $h\geq3$ is used in the middle inequality. Thus, $w\notin hB$.

If $c'\in\Omega(w)$, then write
\[ 
w=b'_1+b'_2+\cdots+b'_{h-1}+c'.
\]
By \eqref{eq14}, we have $b'_{i}\leq b_t$ for $i=1,2,\ldots,h-1$.
 Consequently,
\[
 c'\geq w-(h-1)b_t>b_t>\frac{w}{h+1}=\delta w.
\]
Thus, condition (4) is true.

All the hypotheses of Lemma \ref{lem1} are now verified. Therefore
there is a subset $C'\subseteq C$ such that $A'=B\cup C'$ is a minimal asymptotic basis of order $h$, and
\[
 (C\setminus C')(x)\leq2B(x)^{h-1}
 =O\bigl((\log x)^{h-1}\bigr)=o(x).
\]
It follows from \eqref{eq3} that
\[
 d(A')=d(A)=\alpha.
\]

\end{proof}

\end{document}